\newcommand{\eps}{\text{\Large{$\varepsilon$}}}
\newcommand{\oneop}{\mathds{1}}
\newcommand{\C}{\mathcal{C}}
\newcommand{\A}{\mathcal{A}}
\newcommand{\RR}{\mathbb{R}}
\newcommand{\CC}{\mathbb{C}}
\DeclareMathOperator{\Hom}{Hom}
\DeclareMathOperator{\id}{id}
\DeclareMathOperator{\Id}{Id}
\DeclareMathOperator{\Tr}{Tr}
\newcommand{\op}{\mathrm{op}}
\newcommand{\lqq}{\lq\lq}
\DeclareRobustCommand{\eg}{e.g.\@\xspace}
\DeclareRobustCommand{\cf}{cf.\@\xspace}
\DeclareRobustCommand{\ie}{i.e.\@\xspace}
\DeclareRobustCommand{\p}{p.\@\xspace}
\DeclareRobustCommand{\Sec}{Sec.\@\xspace}
\DeclareRobustCommand{\Prop}{Prop.\@\xspace}
\DeclareRobustCommand{\Lem}{Lem.\@\xspace}
\DeclareRobustCommand{\Cor}{Cor.\@\xspace}
\DeclareRobustCommand{\Thm}{Thm.\@\xspace}
\DeclareRobustCommand{\Def}{Def.\@\xspace}
\DeclareRobustCommand{\Rmk}{Rmk.\@\xspace}
\DeclareRobustCommand{\Eq}{Eq.\@\xspace}
\DeclareRobustCommand{\rhs}{r.h.s.\@\xspace}
\DeclareRobustCommand{\etc}{%
    \@ifnextchar{.}%
        {etc}%
        {etc.\@\xspace}%
}
\def\u1net{{\A_\RR}}
\theoremstyle{plain}
\newtheorem{theorem}{Theorem}[section]
\newtheorem{proposition}[theorem]{Proposition}
\theoremstyle{definition}
\newtheorem{definition}[theorem]{Definition}
\theoremstyle{remark}
\newtheorem{remark}[theorem]{Remark}
 \newcommand{\tikzmath}[2][\squarescale]
	{\vcenter{\hbox{
	\begin{tikzpicture}
	[scale=#1]#2
	\end{tikzpicture}}}}
\begin{document}

\title{Bantay's trace in Unitary Modular Tensor Categories}


\author[1]{Luca Giorgetti}
\author[2]{Karl-Henning Rehren}
\affil[1]{\small Dipartimento di Matematica, Universit\`a di Roma Tor Vergata\\
Via della Ricerca Scientifica, 1, I-00133 Roma, Italy\\
{\tt giorgett@mat.uniroma2.it}}
\affil[2]{Institut f\"ur Theoretische Physik, Georg-August-Universit\"at G\"ottingen\\
Friedrich-Hund-Platz, 1, D-37077 G\"ottingen, Germany\\
{\tt rehren@theorie.physik.uni-goettingen.de}}

\date{}

\maketitle

\begin{abstract}
We give a proof of a formula for the trace of self-braidings (in an arbitrary channel) in UMTCs which first appeared in the context of rational conformal field theories (CFTs) \cite{Ban97}. The trace is another invariant for UMTCs which depends only on modular data, and contains the expression of the Frobenius-Schur indicator \cite{NgSc07} as a special case. Furthermore, we discuss some applications of the trace formula to the realizability problem of modular data and to the classification of UMTCs. 
\end{abstract}

\section{Introduction}

The formula for the Frobenius-Schur indicator in rational CFT appears in the work of \cite[\Eq
(1)]{Ban97} without proof. The author only shows that the possible
values are $0$, $\pm 1$ and that they fit with self-conjugacy, reality
or pseudo-reality properties of primary fields. Afterwards, \cite[\Sec
7]{NgSc07} derived the same formula as a special case of \lq\lq higher
degree" indicators, in the more general context of modular tensor
categories (MTCs). We give here a proof, in the case of unitary MTCs (UMTCs), which has the advantage of being simpler and closer to the lines of Bantay. In doing so, we also prove a more general formula, \cf \cite[\Eq (3)]{Ban97}, which expresses the trace of the self-braidings $\eps_{a,a}$ in an \emph{arbitrary} \lq\lq channel" $c\prec a\times a$ (not only $c = \id$ as one needs for the determination of the Frobenius-Schur indicator).  All these formulas have the remarkable property of depending \emph{only} on modular data. In particular, using the trace of self-braidings, we show that the braiding in a UMTC is uniquely determined when the underlying unitary tensor (fusion) structure (UFTC) and the modular data are given.

\section{Bantay's trace}

Let $\C$ be a UMTC. We denote by $n+1$ the rank of $\C$, by $\Delta$
and $N$ respectively its spectrum (set of unitary isomorphism classes
of irreducible objects) and fusion rules, and by $S$ and $T$ its
modular matrices. The numerical invariants $\{n, \Delta, N, S, T\}$
are the \emph{modular data} of $\C$, and recall that they can be
recovered by either $\{S, T\}$ or $\{N, T\}$ alone, due to the
constraints imposed by modularity. In particular, the dimensions $d_i$
and phases $\omega_i$ of the sectors $[a_i]$ are given by
$d_i=S_{0,i}/S_{0,0}$ and $\omega_i = T_{i,i}/T_{0,0}$. 
We refer to \cite{BaKi00}, \cite{Mue10tour}, \cite{EGNO15} for more explanations of terminology and results on UMTCs, and to \cite{GioPhD} for our precise conventions in the widely-used string diagrammatics employed below.

\begin{definition}
For any $[a_i]\in\Delta$ and $[a_k]\in\Delta$ let $m := N^k_{i,i}$, \ie, $m$ is the multiplicity of $[a_k]$ in $[a_i] \times [a_i]$, and $\eps_{a_i,a_i}$ the self-braiding of $a_i$ with itself. Define
$$\tau_{k,i} \equiv \Tr_{a_k}(\eps_{a_i,a_i}) := \sum_{e=1,\ldots,m} (t^e_k)^*\circ \eps_{a_i,a_i} \circ t^e_k$$
where $t^e_k$, $e=1,\ldots, m$ is a linear basis of $\Hom_{\C}(a_k,a_i \times a_i)$ of orthonormal isometries in the sense that $(t^e_k)^*\circ t^{e'}_k = \delta_{e,e'}1_k$. If $m=0$, then $\tau_{k,i} := 0$.

The number $\tau_{k,i}$ does not depend on the choice of objects
$a_i\in[a_i]$, $a_k\in[a_k]$ in their equivalence classes, nor on that of the basis of isometries, by naturality of the braiding and by the trace property of left inverses \cite[\Lem 3.7]{LoRo97}. Hence it defines another invariant for the UMTC $\C$.

In particular, for $k=0$, $\nu_i:=\omega_i\tau_{0,i}$ is the
Frobenius-Schur indicator, which takes the values $0$, $+1$ or $-1$
(\cf \cite{Ban97} and \Sec \ref{subsec:realiz}) respectively if $[a_i]$ is
non-self-conjugate, real or pseudo-real in $\C$. (See \cite[\p
139]{LoRo97} for the notion of reality and pseudo-reality of objects
in unitary tensor categories. Another terminology is complex, real,
and quaternionic.)
\end{definition}

\begin{remark}
Using the trace property of left inverses and naturality of the braiding (\cf proof of \cite[\Prop 2.4]{Mue00}), one can show that $\tau_{k,i} = \tau_{\overline k,\overline i}$. 
\end{remark}

\begin{proposition}\label{prop:bantaytrace}
Let $\C$ be a UMTC. The invariants $\tau_{k,i}$ can be expressed in terms of modular data, namely
\begin{equation}\label{eq:bantaytrace}
\tau_{k,i}
%
%
	\,=\, \omega_i^{-1} \sum_{r,s\in\Delta} \overline{S_{r,k}} S_{s,0} N^{i}_{r,s} \frac{\omega_s^2}{\omega_r^2}
\end{equation}
for every $i,k = 0,\ldots,n$.

In particular, for every $i=0,\ldots,n$ we have
\begin{equation}\label{eq:bantayFSindicator}
\nu_i = \sum_{r,s\in\Delta}  S_{r,0} S_{s,0} N^{i}_{r,s} \frac{\omega_r^2}{\omega_s^2}\,.
\end{equation}
\end{proposition}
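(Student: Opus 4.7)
The plan is to prove \eqref{eq:bantaytrace} by translating $\tau_{k,i}$ into a closed string diagram of $\C$ and then reducing it to modular data by standard ribbon-category moves, essentially Bantay's CFT argument performed directly on diagrams.

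\textbf{Setup.} Since $\End(a_k)=\CC\cdot\oneop_k$, the defining identity $\tau_{k,i}\cdot\oneop_k=\sum_e(t^e_k)^*\circ\eps_{a_i,a_i}\circ t^e_k$ together with the trace property of left inverses yields, after taking the categorical trace over $a_k$ and using cyclicity,
\[
d_k\cdot\tau_{k,i}\;=\;\Tr_{a_i\times a_i}\!\bigl(\pi_k\circ\eps_{a_i,a_i}\bigr),\qquad \pi_k:=\sum_{e=1}^m t^e_k(t^e_k)^*,
\]
where $\pi_k$ is the orthogonal projection of $a_i\times a_i$ onto its $[a_k]$-isotypic component. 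Pictorially this is a closed planar graph: two $a_i$-strands with one self-crossing, closed through an $a_k$-line at top and bottom.

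\textbf{Diagram reduction.} Next I resolve the pair of $a_i$-strands by fusing them through intermediate channels $a_r\times a_s$: I insert a complete orthonormal family of isometries $a_i\prec a_r\times a_s$ indexed by $(r,s)\in\Delta\times\Delta$ with multiplicity $N^i_{r,s}$. Naturality of the braiding allows the single self-crossing to be pushed past this splitting, turning it into two separate crossings of the $a_r$- and $a_s$-strands with the surviving $a_i$-loop and depositing ribbon (Moore--Seiberg) phases. After closing $a_i$ through its coevaluation and evaluation, the resulting diagram factorizes into two Hopf-link-type subdiagrams: one involving $a_r$ and $a_k$, which contributes $\overline{S_{r,k}}$ (up to a global-dimension factor, with the conjugation dictated by the orientation of the surviving crossing), and one in which $a_s$ closes on itself, contributing $d_s=S_{s,0}/S_{0,0}$. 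A single ribbon twist remains on the $a_i$-loop and produces the overall factor $\omega_i^{-1}$, while the phases stripped from the $a_r$- and $a_s$-strands combine to $\omega_s^2/\omega_r^2$. Summing over $(r,s)$ with weight $N^i_{r,s}$ and cancelling $d_k$ against the global-dimension normalizations reproduces \eqref{eq:bantaytrace}. The Frobenius--Schur formula \eqref{eq:bantayFSindicator} is the specialization $k=0$, using $\overline{S_{r,0}}=S_{r,0}$, $\nu_i=\omega_i\tau_{0,i}$, and the symmetry $N^i_{r,s}=N^i_{s,r}$ to relabel $\omega_s^2/\omega_r^2$ into $\omega_r^2/\omega_s^2$.

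\textbf{Main difficulty.} The hard part is the bookkeeping of phases and of the various powers of the global dimension $D$ during these moves. In particular, deciding whether $S_{r,k}$ or $\overline{S_{r,k}}$ appears requires distinguishing $\eps_{a_i,a_i}$ from $\eps_{a_i,a_i}^{-1}$ at the surviving crossing, and showing that the twists absorbed from the self-braiding, from the Moore--Seiberg transformation, and from the normalization of the isometries combine to precisely $\omega_i^{-1}\cdot\omega_s^2/\omega_r^2$ (rather than any other balanced combination of ribbon phases) is delicate. Once these signs, phases and normalizations are pinned down, the remainder of the computation is mechanical.
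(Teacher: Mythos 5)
Your opening step is fine and agrees with the paper ($d_k\,\tau_{k,i}=\Tr_{a_i\times a_i}(\pi_k\circ\eps_{a_i,a_i})$, i.e.\ the $1/d_k$ closure of the $k$-line), and so is the final specialization to $k=0$. The genuine gap is in the step that is supposed to produce the double sum over $(r,s)$: there is no \lq\lq complete orthonormal family of isometries $a_i\prec a_r\times a_s$ indexed by $(r,s)\in\Delta\times\Delta$''. For fixed $(r,s)$ the isometries $t^e\in\Hom_\C(a_i,a_r\times a_s)$ satisfy $\sum_e (t^e)^*\circ t^e=N^i_{r,s}\,\oneop_{a_i}$, while $\sum_e t^e\circ(t^e)^*$ is a projection in $\End(a_r\times a_s)$ --- an endomorphism space that changes with $(r,s)$ --- so these cannot be summed over $(r,s)$ to yield a resolution of $\oneop_{a_i}$; no such completeness relation exists. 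Relatedly, your argument never invokes modularity of $\C$, yet the formula fails without it: the only legitimate way to trade the $k$-channel projection for a sum over a second label weighted by $S$-matrix entries is the relation $\sum_\beta S_{\alpha,\beta}S_{\beta,k}=\delta_{\alpha,\overline k}$ (i.e.\ $S^2=C$), which is precisely where the paper uses modularity. Finally, the part you defer as \lq\lq delicate bookkeeping'' --- that the diagram factorizes into $\overline{S_{r,k}}$, $d_s$, $\omega_i^{-1}$ and $\omega_s^2/\omega_r^2$ --- is the actual content of the proof, not a routine verification; as written it asserts the answer rather than deriving it.

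For comparison, the paper's route is: close the $k$-line (factor $1/d_k$); use $S^2=C$ to replace the $k$-channel isometries by a sum over all channels $\alpha$ of $a_{\overline i}\times a_{\overline i}$ encircled by a killing ring labelled $\beta$ and weighted by $S_{\beta,k}$; collapse the $\alpha$-isometries to the identity by completeness in $\End(a_{\overline i}\times a_{\overline i})$; pass the ring through the two $a_{\overline i}$-strands using the monodromy expansion \cite[\Eq (2.30)]{Reh90} twice, which is where the phases $\omega_\gamma\omega_\eta/(\omega_i^2\omega_\beta^2)$ enter; and only at the very end does $N^i_{r,s}$ appear, as $\dim\Hom_\C(a_{\overline\gamma},a_\beta\times a_i)$ obtained from the trace of a projection together with Frobenius reciprocity --- not as an initial completeness insertion. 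If you want to salvage your plan, the killing-ring identity is the missing ingredient that simultaneously legitimizes the second summation and supplies the $S$-matrix weights.
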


\begin{remark}\label{rmk:conjugationmissing}
Notice that equation (\ref{eq:bantaytrace}) differs from \cite[\Eq (3)]{Ban97} by a complex conjugation of $S_{r,k}$. This is presumably due to a different convention: while our $S$ matrix satisfies $(ST)^3=C$, see \cite[\Eq (5.19--21)]{Reh90}, its complex conjugate $\overline{S} = S^{-1}$ satisfies $(\overline{S} T)^3=\oneop$, and Bantay doesn't specify his choice.

In the special case of equation (\ref{eq:bantayFSindicator}), which coincides with Bantay's expression \cite[\Eq (1)]{Ban97} for the Frobenius-Schur indicator, the complex conjugation is irrelevant because $k=0$ is self-conjugate. 
\end{remark}

\begin{proof}
The following argument makes clear the advantages of using the string-diagrammatical notation, indeed the proof written with usual sums, compositions and products of morphism would be (to us) almost unreadable.

Assume first that $m>0$. Using string diagrammatics, the trace is
$$\tau_{k,i} = 
\sum_{e=1,\ldots,m} 
\tikzmath{
	\draw[ultra thick] (2.5,9) -- (2.5,15);
	\draw[ultra thick] (2.5,-9) -- (2.5,-15);
	\draw (5,11.5) node {$e$};
	\draw (5.1,-11.4) node {$e$};
	\draw (2.5,19) node {$k$};
	\draw (2.5,-19) node {$k$};

	\draw[ultra thick] (0,5) -- (0,5) .. controls (0,10) and (5,10) .. (5,5) -- (5,5);
	\draw[ultra thick] (0,-5) -- (0,-5).. controls (0,-10) and (5,-10) .. (5,-5) -- (5,-5);
	\draw (-2,5) node {$i$};
	\draw (7,5) node {$i$};

	\draw[ultra thick] (5,5) -- (5,5) .. controls (5,0) and (0,0) .. (0,-5) -- (0,-5);
	\fill[color=white] (2.5,0) circle (2); 
	\draw[ultra thick] (0,5) -- (0,5) .. controls (0,0) and (5,0) .. (5,-5) -- (5,-5);
	}.
$$
We write
$$
\sum_{e} 
\tikzmath{
	\draw[ultra thick] (2.5,9) -- (2.5,15);
	\draw[ultra thick] (2.5,-9) -- (2.5,-15);
	\draw (5,11.5) node {$e$};
	\draw (5.1,-11.4) node {$e$};
	\draw (2.5,19) node {$k$};
	\draw (2.5,-19) node {$k$};

	\draw[ultra thick] (0,5) -- (0,5) .. controls (0,10) and (5,10) .. (5,5) -- (5,5);
	\draw[ultra thick] (0,-5) -- (0,-5).. controls (0,-10) and (5,-10) .. (5,-5) -- (5,-5);
	\draw (-2,5) node {$i$};
	\draw (7,5) node {$i$};

	\draw[ultra thick] (5,5) -- (5,5) .. controls (5,0) and (0,0) .. (0,-5) -- (0,-5);
	\fill[color=white] (2.5,0) circle (2); 
	\draw[ultra thick] (0,5) -- (0,5) .. controls (0,0) and (5,0) .. (5,-5) -- (5,-5);
	}
=
\sum_{e} \frac{1}{d_k} 
\tikzmath{
	\draw[ultra thick] (2.5,9) -- (2.5,12);
	\draw[ultra thick] (2.5,-9) -- (2.5,-12);
	\draw (5,11.5) node {$e$};
	\draw (5.1,-11.4) node {$e$};
	\draw (-2,5) node {$i$};
	\draw (7,5) node {$i$};
	
	\draw[ultra thick] (0,5) -- (0,5) .. controls (0,10) and (5,10) .. (5,5) -- (5,5);
	\draw[ultra thick] (0,-5) -- (0,-5).. controls (0,-10) and (5,-10) .. (5,-5) -- (5,-5);
	
	\draw[ultra thick] (5,5) -- (5,5) .. controls (5,0) and (0,0) .. (0,-5) -- (0,-5);
	\fill[color=white] (2.5,0) circle (2); 
	\draw[ultra thick] (0,5) -- (0,5) .. controls (0,0) and (5,0) .. (5,-5) -- (5,-5);
	
	\draw[ultra thick] (2.5,12) -- (2.5,12) .. controls (2.5,22) and (12.5,22) .. (12.5,12) -- (12.5,12);
	\draw[ultra thick] (12.5,12) -- (12.5,-12);
	\draw[ultra thick] (2.5,-12) -- (2.5,-12) .. controls (2.5,-22) and (12.5,-22) .. (12.5,-12) -- (12.5,-12);
	\draw (0.5,19) node {$k$};
	\draw (15.5,0) node {$\overline k$};
	}
=
\sum_{e} \frac{1}{d_k} 
\tikzmath{
	\draw[ultra thick] (5,5) -- (5,5) .. controls (5,0) and (0,0) .. (0,-5) -- (0,-5);
	\fill[color=white] (2.5,0) circle (2); 
	\draw[ultra thick] (0,5) -- (0,5) .. controls (0,0) and (5,0) .. (5,-5) -- (5,-5);
	\draw (-2,5) node {$i$};
	\draw (7,5) node {$i$};
	
	\draw[ultra thick] (0,5) -- (0,10) .. controls (0,32) and (22.5,32) .. (22.5,10) -- (22.5,10);
	\draw[ultra thick] (5,5) -- (5,10) .. controls (5,25) and (17.5,25) .. (17.5,10) -- (17.5,10);
	\draw[ultra thick] (0,-5) -- (0,-10) .. controls (0,-32) and (22.5,-32) .. (22.5,-10) -- (22.5,-10);
	\draw[ultra thick] (5,-5) -- (5,-10) .. controls (5,-25) and (17.5,-25) .. (17.5,-10) -- (17.5,-10);
	
	\draw[ultra thick] (17.5,10) -- (17.5,10) .. controls (17.5,5) and (22.5,5) .. (22.5,10) -- (22.5,10);
	\draw[ultra thick] (20,6) -- (20,-6);
	\draw (23,0) node {$\overline k$};
	\draw (17,4) node {$e$};
	\draw (17,-4) node {$e$};

	\draw[ultra thick] (17.5,-10) -- (17.5,-10) .. controls (17.5,-5) and (22.5,-5) .. (22.5,-10) -- (22.5,-10);
	} 
$$
by the trace property. Invertibility of the $S$ matrix (equivalent to \emph{modularity} of $\C$ by \cite[\Sec 5]{Reh90}), or better $S^2 = C$, gives $\sum_{\beta=0,\ldots,n} S_{\alpha,\beta} S_{\beta,k} = \delta_{\alpha,\overline k}$ for every $\alpha = 0,\ldots, n$, hence the previous line can be rewritten as
$$
= \sum_{f,\alpha,\beta} \frac{1}{d_{\overline\alpha}} S_{\alpha,\beta} S_{\beta,k} 
\tikzmath{
	\draw[ultra thick] (5,5) -- (5,5) .. controls (5,0) and (0,0) .. (0,-5) -- (0,-5);
	\fill[color=white] (2.5,0) circle (2); 
	\draw[ultra thick] (0,5) -- (0,5) .. controls (0,0) and (5,0) .. (5,-5) -- (5,-5);
	\draw (-2,5) node {$i$};
	\draw (7,5) node {$i$};
	
	\draw[ultra thick] (0,5) -- (0,10) .. controls (0,32) and (22.5,32) .. (22.5,10) -- (22.5,10);
	\draw[ultra thick] (5,5) -- (5,10) .. controls (5,25) and (17.5,25) .. (17.5,10) -- (17.5,10);
	\draw[ultra thick] (0,-5) -- (0,-10) .. controls (0,-32) and (22.5,-32) .. (22.5,-10) -- (22.5,-10);
	\draw[ultra thick] (5,-5) -- (5,-10) .. controls (5,-25) and (17.5,-25) .. (17.5,-10) -- (17.5,-10);
	
	\draw[ultra thick] (17.5,10) -- (17.5,10) .. controls (17.5,5) and (22.5,5) .. (22.5,10) -- (22.5,10);
	\draw[ultra thick] (20,6) -- (20,-6);
	\draw (23,0) node {$\alpha$};
	\draw (17,3) node {$f$};
	\draw (17,-5) node {$f$};

	\draw[ultra thick] (17.5,-10) -- (17.5,-10) .. controls (17.5,-5) and (22.5,-5) .. (22.5,-10) -- (22.5,-10);
	}
= \sum_{f,\alpha,\beta} \frac{1}{d_{\overline{\alpha}}} \frac{d_\alpha}{|\sigma|} S_{\beta,k}
\tikzmath{
	\draw[ultra thick] (5,5) -- (5,5) .. controls (5,0) and (0,0) .. (0,-5) -- (0,-5);
	\fill[color=white] (2.5,0) circle (2); 
	\draw[ultra thick] (0,5) -- (0,5) .. controls (0,0) and (5,0) .. (5,-5) -- (5,-5);
	\draw (-2,5) node {$i$};
	\draw (7,5) node {$i$};
	\draw (24,16) node {$f$};
	\draw (23,-12) node {$f$};	
	
	\draw[ultra thick] (0,5) -- (0,10) .. controls (0,32) and (22.5,32) .. (22.5,20) -- (22.5,20);
	\draw[ultra thick] (5,5) -- (5,10) .. controls (5,25) and (17.5,25) .. (17.5,20) -- (17.5,20);
	\draw[ultra thick] (0,-5) -- (0,-10) .. controls (0,-32) and (22.5,-32) .. (22.5,-20) -- (22.5,-20);
	\draw[ultra thick] (5,-5) -- (5,-10) .. controls (5,-25) and (17.5,-25) .. (17.5,-20) -- (17.5,-20);
	
	\draw[ultra thick] (17.5,20) -- (17.5,20) .. controls (17.5,15) and (22.5,15) .. (22.5,20) -- (22.5,20);
	\draw[ultra thick] (20,16) -- (20,-16);
	\draw (17,3) node {$\alpha$};
	\draw (28,4) node {$\beta$};	
	\draw[ultra thick] (17.5,-20) -- (17.5,-20) .. controls (17.5,-15) and (22.5,-15) .. (22.5,-20) -- (22.5,-20);
	
	\draw[ultra thick] (12,-5) -- (12,-5) .. controls (10,-2) and (10,2) .. (12,5) -- (12,5);
	\draw[ultra thick] (25,3) -- (25,3) .. controls (25,8) and (19,15) .. (12,5) -- (12,5);
	\fill[color=white] (20,10) circle (2.5); 
	\draw[ultra thick] (20,16) -- (20,0);
	\fill[color=white] (20,-5) circle (2.5); 
	\draw[ultra thick] (25,3) -- (25,3) .. controls (25,-2) and (15,-12) .. (12,-5) -- (12,-5);
	}
$$
where $t_{\alpha}^f$, for $\alpha = 0,\ldots, n$ and $f=1,\ldots,N^{\alpha}_{\overline i \overline i}$, runs over orthonormal bases of isometries in $\Hom_\C(a_\alpha,a_{\overline i} \times a_{\overline i})$, whenever $N^{\alpha}_{\overline i \overline i} > 0$, which are in addition mutually orthogonal. The \rhs is obtained by definition of $S_{\alpha,\beta} = |\sigma|^{-1} Y_{\alpha,\beta}$, opening the $\alpha$-ring up to multiplication with $d_\alpha$. This previous insertion procedure is usually referred to as \lq\lq killing-ring'', after \cite{BEK99}. Notice also that $d_\alpha = d_{\overline \alpha}$. By naturality and multiplicativity of the braiding we get
$$
= \sum_{f,\alpha,\beta} \frac{1}{|\sigma|} S_{\beta,k}
\tikzmath{
	\draw[ultra thick] (5,5) -- (5,5) .. controls (5,0) and (0,0) .. (0,-5) -- (0,-5);
	\fill[color=white] (2.5,0) circle (2); 
	\draw[ultra thick] (0,5) -- (0,5) .. controls (0,0) and (5,0) .. (5,-5) -- (5,-5);
	\draw (-2,5) node {$i$};
	\draw (7,5) node {$i$};
	\draw (17,13) node {$f$};
	\draw (23,13) node {$\alpha$};
	\draw (30,-7) node {$\beta$};	

	\draw[ultra thick] (0,5) -- (0,10) .. controls (0,32) and (22.5,32) .. (22.5,20) -- (22.5,20);
	\draw[ultra thick] (5,5) -- (5,10) .. controls (5,25) and (17.5,25) .. (17.5,20) -- (17.5,20);
	\draw[ultra thick] (0,-5) -- (0,-10) .. controls (0,-32) and (22.5,-32) .. (22.5,-20) -- (22.5,6);
	\draw[ultra thick] (5,-5) -- (5,-10) .. controls (5,-25) and (17.5,-25) .. (17.5,-20) -- (17.5,6);
	
	\draw[ultra thick] (17.5,20) -- (17.5,20) .. controls (17.5,15) and (22.5,15) .. (22.5,20) -- (22.5,20);
	\draw[ultra thick] (20,10) -- (20,16);
	\draw[ultra thick] (17.5,6) -- (17.5,6) .. controls (17.5,11) and (22.5,11) .. (22.5,6) -- (22.5,6);
	
	\draw[ultra thick] (11,-15) -- (11,-15) .. controls (8,-12) and (10,-7) .. (12,-5) -- (12,-5);
	\draw[ultra thick] (27,-7) -- (27,-7) .. controls (27,-2) and (21,5) .. (12,-5) -- (12,-5);
	\fill[color=white] (18,-17.2) circle (2.5);
	\fill[color=white] (22,-14.8) circle (2.5);
	\fill[color=white] (17.7,-1.8) circle (2.5);
	\fill[color=white] (22,0) circle (2.5);
	\fill[color=white] (20,0) circle (2.5);
	\draw[ultra thick] (22.5,-10) -- (22.5,6);
	\draw[ultra thick] (17.5,-5) -- (17.5,6);
	\draw[ultra thick] (27,-7) -- (27,-7) .. controls (27,-12) and (17,-22) .. (11,-15) -- (11,-15);
	}
= \sum_{\beta} \frac{1}{|\sigma|} S_{\beta,k}
\tikzmath{
	\draw[ultra thick] (5,5) -- (5,5) .. controls (5,0) and (0,0) .. (0,-5) -- (0,-5);
	\fill[color=white] (2.5,0) circle (2); 
	\draw[ultra thick] (0,5) -- (0,5) .. controls (0,0) and (5,0) .. (5,-5) -- (5,-5);
	\draw (-2,5) node {$i$};
	\draw (7,5) node {$i$};
	\draw (30,-3) node {$\beta$};	
	
	\draw[ultra thick] (0,5) -- (0,10) .. controls (0,32) and (22.5,32) .. (22.5,20) -- (22.5,6);
	\draw[ultra thick] (5,5) -- (5,10) .. controls (5,25) and (17.5,25) .. (17.5,20) -- (17.5,6);
	\draw[ultra thick] (0,-5) -- (0,-10) .. controls (0,-32) and (22.5,-32) .. (22.5,-20) -- (22.5,6);
	\draw[ultra thick] (5,-5) -- (5,-10) .. controls (5,-25) and (17.5,-25) .. (17.5,-20) -- (17.5,6);
	
	\draw[ultra thick] (11,-10) -- (11,-10) .. controls (8,-7) and (10,-2) .. (12,0) -- (12,0);
	\draw[ultra thick] (27,-2) -- (27,-2) .. controls (27,3) and (21,10) .. (12,0) -- (12,0);
	\fill[color=white] (18,-12.2) circle (2.5);
	\fill[color=white] (22,-9.8) circle (2.5);
	\fill[color=white] (17.7,4.8) circle (2.5);
	\fill[color=white] (22,5) circle (2.5);
	\draw[ultra thick] (22.5,-5) -- (22.5,11);
	\draw[ultra thick] (17.5,0) -- (17.5,11);
	\draw[ultra thick] (27,-2) -- (27,-2) .. controls (27,-7) and (17,-17) .. (11,-10) -- (11,-10);
	}
$$
where the equality comes from summing over $\sum_{f,\alpha} t^{f}_\alpha\circ (t^{f}_\alpha)^* = 1_{a_{\overline i} \times a_{\overline i}}$. Expanding the killing ring we obtain
\vspace{-4mm}
$$
= \sum_{\beta} \frac{1}{|\sigma|} S_{\beta,k}
\tikzmath{
	\draw[ultra thick] (5,5) -- (5,5) .. controls (5,0) and (0,0) .. (0,-5) -- (0,-5);
	\fill[color=white] (2.5,0) circle (2); 
	\draw[ultra thick] (0,5) -- (0,5) .. controls (0,0) and (5,0) .. (5,-5) -- (5,-5);
	\draw (-2,5) node {$i$};
	\draw (7,5) node {$i$};
	\draw (50,15) node {$\beta$};
	\draw (14.5,9) node {$\overline i$};
	\draw (26,9) node {$\overline \beta$};

	\draw[ultra thick] (0,5) -- (0,10) .. controls (0,32) and (32.5,32) .. (32.5,21) -- (32.5,21);
	\draw[ultra thick] (5,5) -- (5,10) .. controls (5,25) and (17.5,25) .. (17.5,15) -- (17.5,10);
	\draw[ultra thick] (17.5,0) -- (17.5,0) .. controls (17.5,5) and (22.5,5) .. (22.5,10) -- (22.5,10);
	\fill[color=white] (20,5) circle (2);
	\draw[ultra thick] (17.5,10) -- (17.5,10) .. controls (17.5,5) and (22.5,5) .. (22.5,0) -- (22.5,0);
	\draw[ultra thick] (22.5,0) -- (22.5,0) .. controls (22.5,-5) and (17.5,-5) .. (17.5,-10) -- (17.5,-10);
	\fill[color=white] (20,-5) circle (2);
	\draw[ultra thick] (22.5,-10) -- (22.5,-10) .. controls (22.5,-5) and (17.5,-5) .. (17.5,0) -- (17.5,0);
	
	\draw[ultra thick] (0,-5) -- (0,-10) .. controls (0,-32) and (32.5,-32) .. (32.5,-21) -- (32.5,-21);
	\draw[ultra thick] (5,-5) -- (5,-10) .. controls (5,-25) and (17.5,-25) .. (17.5,-15) -- (17.5,-10);
	
	\draw[ultra thick] (22.5,-10) -- (22.5,-10) .. controls (22.5,-18) and (37.5,-13) .. (42.5,-5) -- (42.5,-5);
	\draw[ultra thick] (22.5,10) -- (22.5,10) .. controls (22.5,18) and (40,23) .. (45,15) -- (45,15);
	\fill[color=white] (32.5,18.5) circle (2.5);
	\draw[ultra thick] (42.5,-5) -- (42.5,-5) .. controls (47.5,0) and (50,10) .. (45,15) -- (45,15);
	\draw[ultra thick] (32.5,-10) -- (32.5,21);
	\draw[ultra thick] (32.5,-21) -- (32.5,-15);
	}
= \sum_{\beta, \gamma, g} \frac{1}{|\sigma|} S_{\beta,k} \frac{\omega_\gamma}{\omega_i \omega_\beta}
\tikzmath{
	\draw[ultra thick] (5,5) -- (5,5) .. controls (5,0) and (0,0) .. (0,-5) -- (0,-5);
	\fill[color=white] (2.5,0) circle (2); 
	\draw[ultra thick] (0,5) -- (0,5) .. controls (0,0) and (5,0) .. (5,-5) -- (5,-5);
	\draw (-2,5) node {$i$};
	\draw (7,5) node {$i$};
	\draw (51,0) node {$\beta$};
	\draw (14.5,14) node {$\overline i$};
	\draw (26,8.5) node {$\overline \beta$};
	\draw (22.5,1.5) node {$\gamma$};
	\draw (17,1.5) node {$g$};	
	\draw (29,-13.5) node {$\overline \beta$};
	\draw (40.2,-12) node {$\overline i$};
	
	\draw[ultra thick] (0,5) -- (0,10) .. controls (0,32) and (37.5,42) .. (37.5,21) -- (37.5,21);
	\draw[ultra thick] (5,5) -- (5,10) .. controls (5,25) and (17.5,25) .. (17.5,15) -- (17.5,10);
	\draw[ultra thick] (5,-5) -- (5,-5) .. controls (5,-15) and (17.5,-15) .. (17.5,-5) -- (17.5,-5);

	\draw[ultra thick] (17.5,10) -- (17.5,10) .. controls (17.5,5) and (22.5,5) .. (22.5,10) -- (22.5,10);
	\draw[ultra thick] (19.8,-1) -- (19.8,6);
	\draw[ultra thick] (17.5,-5) -- (17.5,-5) .. controls (17.5,0) and (22.5,0) .. (22.5,-5) -- (22.5,-5);
	\draw[ultra thick] (22.5,-5) -- (22.5,-5) .. controls (22.5,-10) and (32.5,-6) .. (32.5,-11) -- (32.5,-11);
	\draw[ultra thick] (22.5,10) -- (22.5,10) .. controls (22.5,20) and (47.5,10) .. (47.5,0) -- (47.5,0);
	\draw[ultra thick] (37.5,-41) -- (37.5,-41) .. controls (37.5,-51) and (47.5,-51) .. (47.5,-41) -- (47.5,-41);
	\fill[color=white] (37.2,11.3) circle (2.5);
	\draw[ultra thick] (37.5,21) -- (37.5,-11);
	\draw[ultra thick] (47.5,-41) -- (47.5,0);
	
	\draw[ultra thick] (0,-5) -- (0,-10) .. controls (0,-32) and (25.5,-52) .. (32.5,-41) -- (32.5,-41);
	\draw[ultra thick] (32.5,-41) -- (32.5,-41) .. controls (32.5,-36) and (37.5,-36) .. (37.5,-31) -- (37.5,-31);
	\fill[color=white] (35,-36) circle (2);
	\draw[ultra thick] (37.5,-21) -- (37.5,-21) .. controls (37.5,-16) and (32.5,-16) .. (32.5,-11) -- (32.5,-11);
	\fill[color=white] (35,-16) circle (2);
	\draw[ultra thick] (32.5,-21) -- (32.5,-21) .. controls (32.5,-16) and (37.5,-16) .. (37.5,-11) -- (37.5,-11);
	\draw[ultra thick] (32.5,-31) -- (32.5,-31) .. controls (32.5,-26) and (37.5,-26) .. (37.5,-21) -- (37.5,-21);
	\fill[color=white] (35,-26) circle (2);
	\draw[ultra thick] (37.5,-31) -- (37.5,-31) .. controls (37.5,-26) and (32.5,-26) .. (32.5,-21) -- (32.5,-21);

	\draw[ultra thick] (37.5,-41) -- (37.5,-41) .. controls (37.5,-36) and (32.5,-36) .. (32.5,-31) -- (32.5,-31);
	}.
$$
This follows from the formula \cite[\Eq (2.30)]{Reh90} for the coefficients of the monodromy, which are invariant and depend only on modular data (phases). Similarly we get
\vspace{-5mm}
$$
= \sum_{\beta, \gamma, g, \eta, h} \frac{1}{|\sigma|} S_{\beta,k} \frac{\omega_\gamma \omega_\eta}{\omega_i^2 \omega_\beta^2}
\tikzmath{
	\draw[ultra thick] (5,5) -- (5,5) .. controls (5,0) and (0,0) .. (0,-5) -- (0,-5);
	\fill[color=white] (2.5,0) circle (2); 
	\draw[ultra thick] (0,5) -- (0,5) .. controls (0,0) and (5,0) .. (5,-5) -- (5,-5);
	\draw (-2,5) node {$i$};
	\draw (7,5) node {$i$};
	\draw (51,0) node {$\beta$};
	\draw (22.5,1.5) node {$\gamma$};
	\draw (17,1.5) node {$g$};	
	\draw (24,-14.5) node {$\overline \beta$};
	\draw (38,-28.5) node {$\eta$};
	\draw (32,-27.5) node {$h$};		
	
	\draw[ultra thick] (0,5) -- (0,10) .. controls (0,32) and (37.5,42) .. (37.5,21) -- (37.5,21);
	\draw[ultra thick] (5,5) -- (5,10) .. controls (5,25) and (17.5,25) .. (17.5,15) -- (17.5,10);
	\draw[ultra thick] (5,-5) -- (5,-5) .. controls (5,-15) and (17.5,-15) .. (17.5,-5) -- (17.5,-5);
	
	\draw[ultra thick] (17.5,10) -- (17.5,10) .. controls (17.5,5) and (22.5,5) .. (22.5,10) -- (22.5,10);
	\draw[ultra thick] (19.8,-1) -- (19.8,6);
	\draw[ultra thick] (17.5,-5) -- (17.5,-5) .. controls (17.5,0) and (22.5,0) .. (22.5,-5) -- (22.5,-5);
	\draw[ultra thick] (22.5,10) -- (22.5,10) .. controls (22.5,20) and (47.5,10) .. (47.5,0) -- (47.5,0);
	\draw[ultra thick] (37.5,-36) -- (37.5,-36) .. controls (37.5,-46) and (47.5,-46) .. (47.5,-36) -- (47.5,-36);
	\fill[color=white] (37.2,11.3) circle (2.5);
	\draw[ultra thick] (37.5,21) -- (37.5,-11);
	\draw[ultra thick] (47.5,-36) -- (47.5,0);
	
	\draw[ultra thick] (0,-5) -- (0,-10) .. controls (0,-32) and (25.5,-47) .. (32.5,-36) -- (32.5,-36);
	\draw[ultra thick] (37.5,-21) -- (37.5,-21) .. controls (37.5,-16) and (22.5,-10) .. (22.5,-5) -- (22.5,-5) ;
	\fill[color=white] (34.5,-16.5) circle (2.3);
	\draw[ultra thick] (32.5,-21) -- (32.5,-21) .. controls (32.5,-16) and (37.5,-16) .. (37.5,-11) -- (37.5,-11);
	\draw[ultra thick] (32.5,-21) -- (32.5,-21) .. controls (32.5,-26) and (37.5,-26) .. (37.5,-21) -- (37.5,-21);
	\draw[ultra thick] (35,-25) -- (35,-32);
	\draw[ultra thick] (32.5,-36) -- (32.5,-36) .. controls (32.5,-31) and (37.5,-31) .. (37.5,-36) -- (37.5,-36);
	}
$$
where $\beta,\gamma,\eta$ run over the spectrum of $\C$ and $g,h$ run over bases of isometries. The crucial step is to rewrite the previous (by naturality and multiplicativity of the braiding) as
$$
= \sum_{\beta, \gamma, g, \eta, h} \frac{1}{|\sigma|} S_{\beta,k} \frac{\omega_\gamma \omega_\eta}{\omega_i^2 \omega_\beta^2}
\tikzmath{
	\draw[ultra thick] (17.5,20) -- (17.5,20) .. controls (17.5,15) and (22.5,15) .. (22.5,20) -- (22.5,20);
	\draw[ultra thick] (19.8,9) -- (19.8,16);
	\draw[ultra thick] (17.5,5) -- (17.5,5) .. controls (17.5,10) and (22.5,10) .. (22.5,5) -- (22.5,5);
	\draw (22.5,12) node {$\gamma$};
	\draw (17,12) node {$g$};	
	\draw (22.5,-12) node {$\eta$};
	\draw (17,-12) node {$h$};	
	\draw (0,0) node {$i$};	
	\draw (26,0) node {$\overline \beta$};	
	\draw (45,0) node {$\beta$};	
	\draw (-6,0) node {$i$};	
	
	\draw[ultra thick] (17.5,5) -- (17.5,5) .. controls (17.5,-3) and (9,-5) .. (7.5,-5) -- (7.5,-5);
	\fill[color=white] (16.2,-0.2) circle (2.1);
	\draw[ultra thick] (7.5,-5) -- (7.5,-5) .. controls (7.5,-5) and (2.5,-5) .. (2.5,0) -- (2.5,0);
	\draw[ultra thick]  (2.5,0) -- (2.5,0) .. controls  (2.5,5) and (7.5,5) .. (7.5,5) -- (7.5,5);
	\draw[ultra thick] (7.5,5) -- (7.5,5) .. controls (9,5) and (17.5,3) .. (17.5,-5) -- (17.5,-5); 
	\draw[ultra thick] (22.5,5) -- (22.5,-5);
	
	\draw[ultra thick] (17.5,-20) -- (17.5,-20) .. controls (17.5,-15) and (22.5,-15) .. (22.5,-20) -- (22.5,-20);
	\draw[ultra thick] (19.8,-9) -- (19.8,-16);
	\draw[ultra thick] (17.5,-5) -- (17.5,-5) .. controls (17.5,-10) and (22.5,-10) .. (22.5,-5) -- (22.5,-5);
	
	\draw[ultra thick] (17.5,20) -- (17.5,20) .. controls (17.5,30) and (-3.5,30) .. (-3.5,20) -- (-3.5,20);
	\draw[ultra thick] (22.5,20) -- (22.5,20) .. controls (22.5,30) and (42,30) .. (42,20) -- (42,20); 
	\draw[ultra thick] (17.5,-20) -- (17.5,-20) .. controls (17.5,-30) and (-3.5,-30) .. (-3.5,-20) -- (-3.5,-20);
	\draw[ultra thick] (22.5,-20) -- (22.5,-20) .. controls (22.5,-30) and (42,-30) .. (42,-20) -- (42,-20);
	\draw[ultra thick] (-3.5,20) -- (-3.5,-20);
	\draw[ultra thick] (42,20) -- (42,-20);
	}
= \sum_{\beta, \gamma, g} \frac{1}{|\sigma|} S_{\beta,k} \frac{\omega_\gamma^2}{\omega_i \omega_\beta^2}
\tikzmath{
	\draw[ultra thick] (0,-5) -- (0,10) .. controls (0,32) and (22.5,32) .. (22.5,10) -- (22.5,10);
	\draw[ultra thick] (5,-5) -- (5,10) .. controls (5,25) and (17.5,25) .. (17.5,10) -- (17.5,10);
	\draw[ultra thick] (0,-5) -- (0,-10) .. controls (0,-32) and (22.5,-32) .. (22.5,-10) -- (22.5,-10);
	\draw[ultra thick] (5,-5) -- (5,-10) .. controls (5,-25) and (17.5,-25) .. (17.5,-10) -- (17.5,-10);
	\draw (-3,0) node {$\beta$};
	\draw (7.5,0) node {$i$};	
	\draw (23,0) node {$\gamma$};	
	\draw (17,3.5) node {$g$};	
	\draw (17,-4) node {$g$};	
	
	\draw[ultra thick] (17.5,10) -- (17.5,10) .. controls (17.5,5) and (22.5,5) .. (22.5,10) -- (22.5,10);
	\draw[ultra thick] (20,6) -- (20,-6);
	\draw[ultra thick] (17.5,-10) -- (17.5,-10) .. controls (17.5,-5) and (22.5,-5) .. (22.5,-10) -- (22.5,-10);
	} 
$$
where we have used $\omega_{\overline i} = \omega_i$ (the phases are a tortile unitary twist on $\C$, see \cite[\Def 2.3]{Mue00}) and $(t^h_\eta)^* \circ t^g_\gamma = \delta_{\gamma,\eta}\delta_{g,h}1_\gamma$. By the trace property of the left inverses we continue as
$$
= \sum_{\beta, \gamma, g} \frac{1}{|\sigma|} S_{\beta,k} \frac{\omega_\gamma^2}{\omega_i \omega_\beta^2} d_{\gamma}
\tikzmath{
	\draw[ultra thick] (2.5,9) -- (2.5,15);
	\draw[ultra thick] (2.5,-9) -- (2.5,-15);
	\draw (5,12) node {$g$};
	\draw (5.1,-11.4) node {$g$};
	\draw (2.5,19) node {$\overline \gamma$};
	\draw (2.5,-19) node {$\overline \gamma$};

	\draw[ultra thick] (0,5) -- (0,5) .. controls (0,10) and (5,10) .. (5,5) -- (5,5);
	\draw[ultra thick] (0,-5) -- (0,-5).. controls (0,-10) and (5,-10) .. (5,-5) -- (5,-5);
	\draw (-3.2,0) node {$\beta$};
	\draw (7.5,0) node {$i$};
	\draw[ultra thick] (0,5) -- (0,-5);
	\draw[ultra thick] (5,5) -- (5,-5);
	}
= \sum_{\beta, \gamma} \frac{1}{|\sigma|} S_{\beta,k} \frac{\omega_\gamma^2}{\omega_i \omega_\beta^2} d_{\gamma} N^{\overline \gamma}_{\beta, i}
$$
because $N^{\overline \gamma}_{\beta, i}$ is defined as the multiplicity of $[a_{\overline \gamma}]$ in $[a_{\beta}] \times [a_i]$. 
Moreover, $N^{\overline \gamma}_{\beta, i} = N^{i}_{\overline \beta, \overline \gamma}$ by Frobenius reciprocity, hence
$$ \Tr_{a_k}(\eps_{a_i,a_i}) 
= \sum_{\beta, \gamma} \frac{1}{|\sigma|} S_{\beta,k} \frac{\omega_\gamma^2}{\omega_i \omega_\beta^2} d_{\gamma} N^{i}_{\overline \beta, \overline \gamma} 
= \omega_i^{-1}\sum_{\beta, \gamma} \overline{S_{\beta,k}} S_{\gamma,0} \frac{\omega_\gamma^2}{\omega_\beta^2} N^{i}_{\beta, \gamma}$$
after changing the names of the summation indices, using $S_{\overline \beta, k} = \overline{S_{\beta, k}}$ and the definition of $S_{\gamma,0}$.

In the case $m=0$ we have $\tau_{k,i} = 0$ by definition, and the only element in $\Hom_{\C}(a_k,a_i \times a_i)$ is the zero morphism. Anyway one can repeat the proof with $t_k = 0$, which is soon absorbed in the summation exploiting modularity of $\C$, \ie, $S^2 = C$. So we have shown (\ref{eq:bantaytrace}) in either case. 

The proof of (\ref{eq:bantayFSindicator}) now follows by noticing that $S_{\beta,0}$ is real, because $k=0$ is self-conjugate, and we are done.
\end{proof}

While to our knowledge there is no proof in any context of equation
(\ref{eq:bantaytrace}) for the traces of self-braiding
intertwiners, the equation (\ref{eq:bantayFSindicator}) for the Frobenius-Schur indicator has already been considered and derived in more abstract tensor categorical and Hopf-algebraic settings. See \cite[\Thm 7.5, \Rmk 7.6]{NgSc07} for its first appearance in the context of MTCs, \cite[\Thm 4.25]{Wan10lec} for a proof using string-diagrammatical calculus, \cite{Ng12} for an overview on Frobenius-Schur indicators in Hopf algebras, and \cite[\Thm VI.1.3]{BruPhD} for a generalization to self-conjugate objects in premodular categories.

\begin{remark}
It is interesting to notice that the $S$ and $T$ matrices employed in
\cite{Ban97} are those arising from the modular transformations of the
Virasoro characters in CFT, see \cite{Ver88}. On the other hand, our
present proof holds for UMTCs, where the $S$ and $T$ matrices are
defined using left inverses and braidings, as in \cite{Reh90}. Both
versions of $S$, $T$ represent the modular group and diagonalize the
fusion rules. The equality of the expressions appearing in either case
(up to Remark \ref{rmk:conjugationmissing}) is another hint in the
direction that the two versions should coincide, despite a proof 
of the very existence of a modular transformation law for characters
in general rational CFT is still missing.
\end{remark}

\begin{remark}
We also stress that Proposition \ref{prop:bantaytrace} expresses
characteristic features of UMTCs rather than rational CFTs, and that
they should be attributed to the former, \eg, when classification
issues are concerned. It is well-known that different CFTs can give
rise to equivalent UMTCs (as abstract braided tensor categories), \eg,
by taking tensor products with \lq holomorphic' nets. This \lq
degeneration' problem is investigated in \cite{GiRe15} in the language
of Algebraic QFT, where a clear distinction between rational CFTs
(described as Haag-Kastler nets) and the associated UMTCs (given by
the respective categories of DHR representations) can be made. In
AQFT, rationality = modularity of the representation category is a consequence of
natural structure assumptions on the local observables \cite[\Cor 37]{KLM01}. 
\end{remark}

\section{Applications}

\subsection{On the realizability of modular data}\label{subsec:realiz}

As observed by Bantay in the case of primary fields in rational CFT, see comments after \cite[\Eq (5)]{Ban97}, the formula for the trace of self-braidings (\ref{eq:bantaytrace}) we derived for UMTCs imposes constraints on the admissible modular data. Knowing the dimension $N_{i,i}^{k}$ of the matrix of coefficients of $\eps_{a_i,a_i}$ in the channel corresponding to $a_k \prec a_i \times a_i$ and its trace $\tau_{k,i}$, one can compute the \emph{multiplicity} $m_{k,i}^{\pm}$ of either eigenvalue $\pm \omega_i^{-1}\omega_k^{1/2}$ (where $\pm$ depends also on the choice of the square root), namely
\begin{equation}\label{eq:eigenmult}
m_{k,i}^{\pm} = \frac{1}{2} \left( N^{k}_{i,i} \pm \omega_i\omega_k^{-1/2}\tau_{k,i} \right)
\end{equation}
\cf \cite[\Eq (4)]{Ban97}. In particular, the number $\omega_i\omega_k^{-1/2}\tau_{k,i}$ must be an integer with the same parity of $N^{k}_{i,i}$ for every $i,k = 0,\ldots,n$. Moreover, it must not exceed the range $-N^{k}_{i,i}, \ldots, N^{k}_{i,i}$. Thus we can add another item in the list of constraints \cite[\Def 2.14]{BNRW15} which modularity of $\C$ imposes on its modular data, at least in the unitary case.

\begin{remark}
The property that $\nu_{i}$ takes values in $\{0,\pm 1\}$ is independent of all the relations among the entries of the modular data, see \cite[\Sec 2]{Gan05}. A fortiori the integrality properties on products of $\omega_i$ and $\tau_{k,i}$ listed above are independent as well. 
\end{remark}

\subsection{On the classification of UMTCs}\label{subsec:classif}

We want to address the question whether the modular data of a UMTC $\C$ uniquely determine its unitary braided tensor equivalence class, \ie, completeness of the set of numerical invariants. The answer is expected to be positive among experts, see, e.g., \cite{RSW09}.

In this note, using Proposition \ref{prop:bantaytrace}, we show that in a UMTC $\C$ the $R$-matrices (see \cite[\Sec 3]{FRS92}, \cite[\Sec 3.3]{DHW13}) can be \lq canonically' expressed in terms of the modular data, namely 

\begin{proposition}\label{prop:Rcan}
Let $\C$ be a UMTC with modular data $\{n,\Delta,N,S,T\}$. Choose representatives $a,b,c,\ldots$ in each unitary isomorphism class of irreducible objects in $\Delta$. 

There is a suitable choice of orthonormal bases of isometries $t^{e}_{c,ab}$ in $\Hom_{\C}(c, a \times b)$, where  $e=1,\ldots, N^{k}_{i,j}$ and $i,j,k$ label respectively the sector of $a,b,c$ in $\Delta$, for every triple $(a,b,c)$, such that the following holds. 

If $a\neq b$, then
\begin{equation}\label{eq:Rabcan}
R_{c,ab} = R_{c,ba} = \Big(\frac{\omega_k}{\omega_i\omega_j}\Big)^{1/2}\oneop_{N^{k}_{i,j}}\, ,
\quad R^\op_{c,ab} = R^\op_{c,ba} =\Big(\frac{\omega_i\omega_j}{\omega_k}\Big)^{1/2}\oneop_{N^{k}_{i,j}}
\end{equation}
where $\oneop_{N^{k}_{i,j}}$ is the identity of $M_{N^{k}_{i,j}\times N^{k}_{i,j}}(\CC)$, $R_{c,ab}$ and $R^\op_{c,ab}$ are respectively the $R$-matrices of the braiding $\eps$ and its opposite $\eps^\op$.

If $a=b$, then
\begin{equation}\label{eq:Raacan}
R_{c,aa} = \omega_i^{-1}\omega_k^{1/2} (E^{+} - E^{-})\, , \quad R^\op_{c,aa} = \omega_i\omega_k^{-1/2} (E^{+} - E^{-})
\end{equation}
where $E^{\pm}$ are orthogonal projections in $M_{N^{k}_{i,j}\times N^{k}_{i,j}}(\CC)$ $($the eigenprojections of $\eps_{a,a}$ in $\Hom_{\C}(c, a \times a)$$)$ such that $E^{+} E^{-} = 0$ and $E^{+} + E^{-} = \oneop_{N^{k}_{i,i}}$, whose dimensions are fixed by the modular data.
\end{proposition}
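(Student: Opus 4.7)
The proof will rest on two ingredients: first, the standard ribbon identity
\[
\eps_{b,a}\circ\eps_{a,b}\circ t \,=\, \frac{\omega_k}{\omega_i\omega_j}\, t \qquad\bigl(t\in\Hom_\C(c,a\times b)\bigr),
\]
which follows from the twist equation $\theta_{a\otimes b}=(\theta_a\otimes\theta_b)\circ\eps_{b,a}\circ\eps_{a,b}$, naturality of $\theta$, and the fact that $\theta$ acts on $a_i$ as the scalar $\omega_i\oneop$; second, Proposition \ref{prop:bantaytrace}, used only at the very end to read off eigenspace dimensions in terms of modular data. I would record the ribbon identity first, since it governs both cases.

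For $a\neq b$, I would fix \emph{arbitrarily} an orthonormal basis $\{t^e_{c,ab}\}$ of $\Hom_\C(c,a\times b)$, choose a square root $\lambda:=(\omega_k/(\omega_i\omega_j))^{1/2}$, and \emph{define} the partner basis by $t^e_{c,ba}:=\lambda^{-1}\,\eps_{a,b}\circ t^e_{c,ab}$. Unitarity of $\eps_{a,b}$ together with $|\lambda|=1$ make $\{t^e_{c,ba}\}$ orthonormal, and the ribbon identity yields $\eps_{b,a}\circ t^e_{c,ba}=\lambda\, t^e_{c,ab}$; the prescription is manifestly symmetric under $(a,b)\leftrightarrow (b,a)$. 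Reading off $R$-matrices in these bases gives $R_{c,ab}=R_{c,ba}=\lambda\,\oneop_{N^k_{i,j}}$, and the formulas for $R^\op$ follow from $\eps^\op=\eps^{-1}$.

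For $a=b$, the ribbon identity specializes to $\eps_{a,a}^2=(\omega_k/\omega_i^2)\,\oneop$ on the finite-dimensional Hilbert space $\Hom_\C(c,a\times a)$. Since $\eps_{a,a}$ is unitary and satisfies this quadratic relation, it is normal and its possible eigenvalues are the two square roots $\pm\omega_i^{-1}\omega_k^{1/2}$. Let $E^\pm$ be the corresponding orthogonal eigenprojections and choose an orthonormal basis of $\Hom_\C(c,a\times a)$ adapted to $E^+\oplus E^-$; then $R_{c,aa}=\omega_i^{-1}\omega_k^{1/2}(E^+-E^-)$ in this basis, and the analogue for $R^\op_{c,aa}$ is obtained by inverting the eigenvalues on the same eigenprojections.

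The only substantive point is that the dimensions $\dim E^\pm=m^\pm_{k,i}$ be themselves expressible in modular data, and this is precisely where (\ref{eq:eigenmult}) combined with Proposition \ref{prop:bantaytrace} is invoked: the right-hand side of (\ref{eq:eigenmult}) is manifestly a function of modular data once $\tau_{k,i}$ is, which is the content of (\ref{eq:bantaytrace}). I therefore do not expect a serious obstacle beyond keeping square root conventions consistent across triples $(i,j,k)$; everything else is a direct consequence of the ribbon identity and an adapted choice of basis.
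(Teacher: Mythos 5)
Your proposal is correct and follows essentially the same route as the paper: both rest on the monodromy/ribbon formula (the paper's equation (\ref{eq:monodrexpansion}), cited from \cite{Reh90}) to pin down the eigenvalues, and on Proposition \ref{prop:bantaytrace} via (\ref{eq:eigenmult}) to fix the dimensions of $E^\pm$ from the modular data. The only cosmetic difference is in the case $a\neq b$, where you explicitly construct the partner basis as $t^e_{c,ba}=\lambda^{-1}\eps_{a,b}\circ t^e_{c,ab}$, whereas the paper argues more implicitly from unitarity of $R_{c,ab}$ and the relation $R_{c,ba}=\frac{\omega_k}{\omega_i\omega_j}(R_{c,ab})^{-1}$; the content is the same.
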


\begin{proof}
By definition, the entries of $R_{c,ab}$ and $R^\op_{c,ab}$ are
$$(R_{c,ab})^{e,f} := (t^f_{c,ba})^*\circ \eps_{a,b} \circ t^e_{c,ab}\, , \quad (R^\op_{c,ab})^{e,f} := (t^f_{c,ba})^*\circ \eps^\op_{a,b} \circ t^e_{c,ab}$$
where $t^e_{c,ab}$ and $t^f_{c,ba}$ belong to \emph{some} choice of orthonormal bases of isometries, and they give rise to unitary matrices in $M_{N^{k}_{i,j}\times N^{k}_{i,j}}(\CC)$, because $N^{k}_{i,j} = N^{k}_{j,i}$. Moreover,
$$\eps_{a,b} = \sum_{[c],e,f}  (R_{c,ab})^{e,f}\, t^f_{c,ba}\circ (t^e_{c,ab})^*,\quad \eps^\op_{a,b} = \sum_{[c],e,f}  (R^\op_{c,ab})^{e,f}\, t^f_{c,ba}\circ (t^e_{c,ab})^*.$$
Due to $\eps^\op_{b,a}\circ\eps_{a,b} = \oneop_{a\times b}$ and to the formula for the coefficients of the monodromy \cite[\Eq (2.30)]{Reh90}, namely
\begin{equation}\label{eq:monodrexpansion}
\eps_{b,a}\circ\eps_{a,b} = \frac{1}{\omega_i \omega_j} \sum_{k\in\Delta} \omega_k \sum_{e=1,\ldots,N^{k}_{i,j}} t^e_{c,ab}\circ (t^e_{c,ab})^*
\end{equation}
where $i,j,k$ label respectively the sector of $a,b,c$ in $\Delta$, we have $R^\op_{c,ba} = (R_{c,ab})^{-1}$, $R_{c,ba} = \frac{\omega_k}{\omega_i \omega_j}(R_{c,ab})^{-1}$ and $R^\op_{c,ab} = \frac{\omega_i \omega_j}{\omega_k} R_{c,ab}$. In particular, there is only one independent $R$-matrix for every triple $(a,b,c)$, namely $R_{c,ab}$, irrespectively of the choice of the bases of isometries.

If $a\neq b$ we can choose $t^e_{c,ab}$ and $t^f_{c,ba}$ independently and in such a way that $R_{c,ab}$ is diagonal and equal to the scalar matrix 
$(\frac{\omega_k}{\omega_i\omega_j})^{1/2}\oneop_{N^{k}_{i,j}}$, thus $R_{c,ab} = R_{c,ba} = (R^\op_{c,ab})^{-1} = (R^\op_{c,ba})^{-1} = (\frac{\omega_k}{\omega_i\omega_j})^{1/2}\oneop_{N^{k}_{i,j}}$ and (\ref{eq:Rabcan}) is proved.

The situation is more complicated when $a=b$. In that case we can choose a basis $t^e_{c,aa}$ which diagonalizes $R_{c,aa}$ and by (\ref{eq:monodrexpansion}) we know that the only two possible eigenvalues are $\pm\omega_i^{-1}\omega_k^{1/2}$ where $i$ and $k$ label respectively the sector of $a$ and $c$ in $\Delta$. There is, however, no a priori choice of the basis of isometries in $\Hom_\C(c,a\times a)$ which fixes the sign ambiguity. But now, we know by equation (\ref{eq:bantaytrace}) of Proposition \ref{prop:bantaytrace} that the trace of $\eps_{a,a}$ in the channel $c\prec a \times a$ is an invariant of the UMTC which depends only on modular data, and the same is true for the dimension $N^{k}_{i,i}$ of the matrix $R_{c,aa}$. Hence the multiplicities of eigenvalues are determined by modular data, as in \Sec \ref{subsec:realiz}, and a suitable permutation of the isometries gives (\ref{eq:Raacan}), concluding the proof.
\end{proof}

It is well known that the simultaneous knowledge of the (braiding) $R$-matrices and of the (fusion) $F$-matrices, in some choice of bases, completely determines a UMTC up to unitary braided tensor equivalence, see \cite[\Prop 3.12]{DHW13} for a detailed proof. In view of Proposition \ref{prop:Rcan} we can make the following observation:

\begin{remark}
Let $\C$ be a UMTC with modular data $\{n,\Delta,N,S,T\}$ and choose
bases of isometries in $\Hom_\C(c,a\times b)$ such that the
$R$-matrices $\{R_{c,ab}\}$ are given as in Proposition
\ref{prop:Rcan}. Then every other UMTC with the same modular data as
$\C$ (we know by \cite{ENO05} that there are finitely many candidates
up to unitary braided tensor equivalence) arises as a solution of the
polynomial equations $FF=FFF$, $FRF=RFR$, $FR^{-1}F=R^{-1}FR^{-1}$
(omitting indices, see \cite{FRS92}, \cite{DHW13}). The first set of
equations corresponds to the pentagonal diagrams defining the tensor
structure, the second and third to the hexagonal diagrams defining the
braiding (also known as braiding-fusion equations \footnote{Some
  authors, \eg, \cite{MoSe88}, use instead braiding matrices $B$
  depending on six sectors, that are related to $R$ (schematically) by $B = F^{-1}RF$,
such that the braiding-fusion relations take the form $FB=BBF$. The present $R$ matrices encode only the independent information about the braiding
beyond the fusion.}). The latter provide a system of polynomial \emph{constraints} on the set of possible tensor structures (specified by $F$) that are compatible with the modular data (which determine $R$ in the sense of Proposition \ref{prop:Rcan}).
\end{remark}

Now, let $\C_1$ and $\C_2$ be two UMTCs having the same modular data and the same (or equivalent) underlying unitary fusion structure. Again in view of Proposition \ref{prop:Rcan} it is natural to ask if they are necessarily equivalent as UMTCs. 

As a first step, using the information on the \lq spectrum' of the braiding contained in the modular data, we can say the following

\begin{proposition}\label{prop:delta2U=alpha}
Let $\C_1$ and $\C_2$ be two UMTCs with the same underlying strict UFTC structure, \ie, $\C_1 = (\C,\times,\id, \eps)$, $\C_2 = (\C,\times,\id, \widetilde\eps)$, where $\times$ and $\id$ denote respectively the tensor multiplication and tensor unit. Assume that $\C_1$ and $\C_2$ have the same modular data. Then $\C$ can be equipped with another (equivalent, but non-strict) UFTC structure $(\C,\times,\alpha, \id)$ having tensor multiplication $\times$ and associator $\alpha$, where the equivalence is of the form $(\Id, U, \oneop)$.
Moreover, $\widetilde \eps$ is a braiding on $(\C,\times,\alpha, \id)$, and $(\Id, U, \oneop) : (\C,\times, \id, \eps) \rightarrow (\C,\times, \alpha, \id, \widetilde\eps)$ is a unitary braided tensor equivalence.
\end{proposition}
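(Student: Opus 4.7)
The plan is to construct $U$ as a change-of-basis unitary between bases in which the $R$-matrices of $\eps$ and $\widetilde\eps$ both take the canonical form of Proposition \ref{prop:Rcan}, and then use $U$ to transport the strict associator of $(\C,\times,\id)$ into a (non-strict) associator $\alpha$ on the same underlying tensor category. First I apply Proposition \ref{prop:Rcan} separately to $\C_1$ and $\C_2$ to obtain, for each triple of irreducible representatives $(a,b,c)$, orthonormal bases $\{t^e_{c,ab}\}$ and $\{\widetilde t^{\,e}_{c,ab}\}$ of $\Hom_\C(c,a\times b)$ in which $R_{c,ab}$ and $\widetilde R_{c,ab}$ respectively assume their canonical form. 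Since the modular data of $\C_1$ and $\C_2$ coincide, the canonical forms are numerically identical: for $a\neq b$ they are the same scalar matrix, while for $a=b$ the eigenvalues $\pm\omega_i^{-1}\omega_k^{1/2}$ come with the same multiplicities $m_{k,i}^\pm$ prescribed by \eqref{eq:eigenmult}, and a permutation of basis vectors inside each eigenspace aligns the two canonical forms. Normalizing the bases in the unit channels so that $t^{1}_{a,1a}=\widetilde t^{\,1}_{a,1a}=1_a$ (and similarly on the other side) handles the unit constraints.

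Having aligned the bases, I define on simple $a,b$ the unitary
\[ U_{a,b} \;:=\; \sum_{[c],e} \widetilde t^{\,e}_{c,ab} \circ (t^{e}_{c,ab})^* \;\in\; \End_\C(a\times b), \]
which satisfies $U_{1,a}=U_{a,1}=1_a$, and extend to all objects by linearity and naturality using the semisimple decomposition of $\C$. Because $\eps_{a,b}$ and $\widetilde\eps_{a,b}$ have literally identical matrices in their respective aligned bases, the defining property of $U$ immediately yields the intertwining relation
\[ \widetilde\eps_{a,b} \circ U_{a,b} \;=\; U_{b,a} \circ \eps_{a,b}, \]
which is precisely the compatibility of $(\Id, U, \oneop)$ with the two braidings. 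The associator $\alpha_{a,b,c}\colon(a\times b)\times c \to a\times(b\times c)$ is then defined as the unique unitary natural isomorphism making the tensor-functor coherence diagram for $(\Id, U, \oneop)$ commute, built from $U_{a,b\times c}$, $U_{a\times b, c}$, $U_{a,b}$, $U_{b,c}$ and their inverses in the standard way. With this choice $(\Id, U, \oneop)$ is tautologically a unitary tensor equivalence between $(\C,\times,\id)$ and $(\C,\times,\alpha,\id)$, so the pentagon and the unit triangles for $\alpha$ follow by pushing forward the (trivially satisfied) coherence of the source; hence $(\C,\times,\alpha,\id)$ is a non-strict UFTC.

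For the braided part, the intertwining relation above is precisely the axiom that $(\Id, U, \oneop)$ is a braided tensor functor, so the hexagon axioms for $\widetilde\eps$ on $(\C,\times,\alpha,\id)$ follow from those of $\eps$ on $(\C,\times,\id)$ by pushing forward through the equivalence. The main subtle point is the $a=b$ alignment: Proposition \ref{prop:Rcan} only fixes $R_{c,aa}$ up to the arbitrary labelling of the $\pm$-eigenvectors of $\eps_{a,a}$ in $\Hom_\C(c,a\times a)$, and what saves the argument is precisely Proposition \ref{prop:bantaytrace}: together with \eqref{eq:eigenmult} it forces the eigenspace dimensions $m_{k,i}^\pm$ of $\eps_{a,a}$ and $\widetilde\eps_{a,a}$ to agree, so the residual freedom is absorbed by a permutation within each eigenspace. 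Beyond this, the proof reduces to the essentially formal statement that any coherent family of unitary natural isomorphisms $J_{a,b}$ on a strict tensor category transports its associator and braiding uniquely into a new braided structure for which $(\Id, J, \oneop)$ is a unitary braided tensor equivalence.
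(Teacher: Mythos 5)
Your proposal is correct and follows essentially the same route as the paper: both arguments use the fact that the modular data fix the eigenvalues and multiplicities of the self-braidings and monodromies (via Proposition \ref{prop:bantaytrace} and \eqref{eq:eigenmult}) to produce unitaries $U_{a,b}$ intertwining $\eps$ and $\widetilde\eps$ on pairs of irreducibles, extend them by semisimplicity, and then define $\alpha$ as the coherence defect $\delta^2(U)$ (the paper cites \cite[\Lem 3.1]{Sch01} for this transport step) so that $(\Id,U,\oneop)$ becomes a unitary braided tensor equivalence. The only cosmetic difference is that you realize $U_{a,b}$ explicitly as the change of basis between the two canonical bases of Proposition \ref{prop:Rcan}, whereas the paper phrases the same construction as conjugating unitaries for the monodromies and self-braidings, subsequently arranged to satisfy the commuting square \eqref{eq:Uisbraided}.
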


\begin{proof}
Choose representatives $a,b,c,\ldots$ in each unitary isomorphism class of irreducible objects. As in the proof of Proposition \ref{prop:Rcan}, we know that the spectrum of the monodromies and self-braidings is fixed (including multiplicities) by the modular data. Hence for every pair $(a,b)$ we have unitaries $U_{a,b}\in\Hom_\C(a\times b, a\times b)$, $U_{b,a}\in\Hom_\C(b\times a, b\times a)$, $U_{a,a}\in\Hom_\C(a\times a, a\times a)$, $U_{b,b}\in\Hom_\C(b\times b, b\times b)$ such that
$$\widetilde\eps_{b,a}\circ\widetilde\eps_{a,b} = U^*_{a,b}\circ\eps_{b,a}\circ\eps_{a,b}\circ U_{a,b}\, , \quad 
\widetilde\eps_{a,b}\circ\widetilde\eps_{b,a} = U^*_{b,a}\circ\eps_{a,b}\circ\eps_{b,a}\circ U_{b,a}$$
$$\widetilde\eps_{a,a} = U^*_{a,a}\circ\eps_{a,a}\circ U_{a,a}\, , \quad
\widetilde\eps_{b,b} = U^*_{b,b}\circ\eps_{b,b}\circ U_{b,b}$$
uniquely determined up to left multiplication with unitaries that commute with the respective monodromy or self-braiding. We can arrange them in such a way that, in addition, $\widetilde\eps_{a,b} = U^*_{b,a}\circ\eps_{a,b}\circ U_{a,b}$\,, $\widetilde\eps_{b,a} = U^*_{a,b}\circ\eps_{b,a}\circ U_{b,a}$, \ie
\begin{equation}\label{eq:Uisbraided}
\xymatrix{
	a\times b \ar[r]^{U_{a,b}} \ar[d]_{\widetilde\eps_{a,b}}  &  a\times b \ar[d]^{\eps_{a,b}} \\ 
	b \times a \ar[r]_{U_{b,a}}  &  b \times a
}
\end{equation}
is a commutative diagram. The unitaries $U$ can be extended to arbitrary pairs of objects in $\C$, expressing them as direct sums of irreducibles in the previous choice of representatives and choosing orthonormal bases of isometries realizing the direct sums. It is easy to see that the $U$ are well-defined (independent of the choice of bases of isometries), unitary, natural and make the diagrams (\ref{eq:Uisbraided}) commute for every $a,b$ in $\C$.

As observed in \cite[\Lem 3.1]{Sch01} there is a unique UFTC structure on $\C$ such that $(\Id, U, \oneop):(\C,\times,\id)\rightarrow(\C,\times,\alpha, \id)$ is a unitary tensor equivalence, where the associator $\alpha$ is defined by the left vertical arrow in the diagrams below, such that 
\begin{equation}\label{eq:delta2U=alpha}
\xymatrix{
	(a\times b)\times c \ar[r]^{U_{a,b}\times \oneop_c} \ar[d]_{\alpha_{a,b,c}}  &  
	(a\times b)\times c \ar[r]^{U_{a\times b,c}}  	&
	(a\times b)\times c \ar[d]^{\oneop_{a,b,c}} \\ 
	a\times (b\times c) \ar[r]_{\oneop_a \times U_{b,c}}  &
	a\times (b\times c) \ar[r]_{U_{a,b\times c}}  &
	a\times (b\times c)
}
\end{equation}
commutes for every $a,b,c$ in $\C$, \ie, $\alpha_{a,b,c} := \oneop_a \times U^*_{b,c} \circ U^*_{a,b\times c} \circ U_{a\times b, c} \circ U_{a,b}\times\oneop_c$. In particular, $\alpha$ makes the pentagon diagrams commute. One can check directly that $(\C,\times, \alpha, \id, \widetilde\eps)$ is a braided category, and this proves the second statement.
\end{proof}

As a second step, one would like to understand up to which extent, in the assumptions of Proposition \ref{prop:delta2U=alpha}, the information on the $R$-matrices given by the modular data can be used to construct an actual braided tensor equivalence between $\C_1$ and $\C_2$. However, Ocneanu rigidity (\ie, the vanishing of the Davydov-Yetter cohomology, see \cite[\Sec 7]{ENO05}, \cite[\Sec E.6]{Kit06}), poses a serious obstruction in this direction, namely 

\begin{proposition}\label{prop:Ocneanurig}
Let $\C_1$ and $\C_2$ as in Proposition \ref{prop:delta2U=alpha}, and consider $\C_3 := (\C,\times, \alpha, \id, \widetilde\eps)$. Then there exists a unitary braided tensor equivalence between $\C_1$ and $\C_2$ of the form $(\Id,W,\oneop):\C_1\rightarrow\C_2$, or equivalently a unitary braided tensor equivalence $(\Id,V,\oneop):\C_2\rightarrow\C_3$, where $W = UV^*$, if and only if $\eps = \widetilde\eps$.
\end{proposition}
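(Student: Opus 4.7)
The ``if'' direction I would dispatch immediately: assuming $\eps = \widetilde\eps$ means $\C_1 = \C_2$ as braided strict UFTCs, so the strict identity $(\Id, \oneop, \oneop)$ is already a unitary braided tensor equivalence $\C_1 \to \C_2$; the equivalent formulation is then satisfied by taking $V := U$, which gives $W = UV^* = \oneop$, and $(\Id, U, \oneop): \C_2 \to \C_3$ is the equivalence supplied by Proposition \ref{prop:delta2U=alpha}, since the braiding $\widetilde\eps$ of $\C_2$ now coincides with $\eps$.

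For the converse, I would assume a unitary braided tensor equivalence $(\Id, W, \oneop): \C_1 \to \C_2$ and aim to deduce $\eps = \widetilde\eps$. The key move is to temporarily forget the braidings: then $(\Id, W, \oneop)$ is a unitary tensor autoequivalence of the common underlying UFTC $(\C, \times, \id)$. At this point I would invoke Ocneanu rigidity, in the form of vanishing Davydov--Yetter cohomology for a unitary fusion category, to conclude that every such identity-on-objects tensor autoequivalence is monoidally naturally isomorphic to the strict identity $(\Id, \oneop, \oneop)$. Concretely, this produces a unitary natural transformation $\phi: \Id \Rightarrow \Id$ satisfying the monoidal coherence $\phi_{a \times b} \circ W_{a,b} = \phi_a \times \phi_b$; by semisimplicity $\phi_a = \mu_a \oneop_a$ for scalars $\mu_a$ on each simple $a$ (extended linearly), so that $W_{a,b}$ acts on every irreducible block $c \prec a \times b$ as the scalar matrix $(\mu_a \mu_b / \mu_c) \oneop_{N^c_{a,b}}$.

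Once this block-scalarity of $W$ is established the rest is immediate: substituting into the braiding intertwining relation $\eps_{a,b} \circ W_{a,b} = W_{b,a} \circ \widetilde\eps_{a,b}$ and restricting to the $c$-summand, both $W_{a,b}|_c$ and $W_{b,a}|_c$ reduce to the same scalar $\mu_a \mu_b / \mu_c$, which commutes with any morphism and therefore cancels, leaving $\eps_{a,b}|_c = \widetilde\eps_{a,b}|_c$ on every irreducible summand; hence $\eps = \widetilde\eps$. The equivalence between the two formulations follows by composing (or inverse-composing) with the braided tensor equivalence $(\Id, U, \oneop): \C_1 \to \C_3$ of Proposition \ref{prop:delta2U=alpha}: the relation $W = UV^*$ turns a tensorator $V$ for $\C_2 \to \C_3$ into a tensorator $W$ for $\C_1 \to \C_2$ and vice versa, while preserving the braided-tensor-functoriality on both sides.

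The main obstacle I anticipate is the precise invocation of Ocneanu rigidity: what is needed is not the usual statement about non-deformability of associators, but the closely related statement that any identity-on-objects unitary tensor autoequivalence of a UFTC is monoidally naturally isomorphic to the identity tensor equivalence. This is exactly where the Davydov--Yetter vanishing does the real work; without it, the block-scalarity of $W$ is genuinely unavailable, and nothing forces the two braidings to agree. Everything else in the argument is a direct unwinding of definitions.
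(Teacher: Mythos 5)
Your argument is essentially the paper's own proof: the paper likewise reduces everything to the cocycle condition on the tensorator (there phrased as $\delta^2(VU^*)=\oneop$ via $\alpha=\delta^2(V)=\delta^2(U)$, rather than directly as $\delta^2(W)=\oneop$ between the two strict structures), invokes the vanishing of Davydov--Yetter cohomology \cite[\Thm 2.27]{ENO05} to put the tensorator into the block-scalar coboundary form $\mu_a\mu_b/\mu_c$ on each irreducible summand $c\prec a\times b$, and then cancels these scalars in the braiding-compatibility relation to conclude $\eps=\widetilde\eps$. The delicate point you single out at the end --- upgrading the infinitesimal Davydov--Yetter vanishing to the multiplicative statement that the unitary $2$-cocycle is a coboundary --- is precisely the step the paper also leans on, with the same references, so your proposal matches the paper's route in both its structure and its one nontrivial input.
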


\begin{proof}
To prove the non-trivial implication, observe that $(\Id,V,\oneop)$ is a tensor equivalence if and only if $\alpha = \delta^2(V)$, where we denote $\delta^2(V)_{a,b,c} := \oneop_a \times V^*_{b,c} \circ V^*_{a,b\times c} \circ V_{a\times b, c} \circ V_{a,b}\times\oneop_c$. But also $\alpha = \delta^2(U)$ by definition and $\delta^2(VU^*) = \oneop$ by naturality. The latter is infinitesimally read as a 2-cocycle condition in the sense of the Davydov-Yetter, see \cite[\Sec 4, 5]{Dav97}, \cite[\Sec E.6.2]{Kit06}, but the cohomology of this complex vanishes \cite[\Thm 2.27]{ENO05}, hence
\begin{equation}\label{eq:2-coboundaryexpansion}
V_{a,b}U_{a,b}^* = \frac{1}{X_a X_b} \sum_{k\in\Delta} X_c \sum_{e=1,\ldots,N^{k}_{i,j}} t^e_{c,ab}\circ (t^e_{c,ab})^*
\end{equation}
for every $a,b$ in $\C$, where $X_a$ are phases, $i,j,k$ label respectively the sectors of $a,b,c$ in $\Delta$, and $t^{e}_{c,ab}$, $e=1,\ldots, N^{k}_{i,j}$, is a basis of isometries in $\Hom_{\C}(c, a \times b)$.

Now, $(\Id,V,\oneop)$ is braided if and only if $\widetilde\eps_{a,b} V_{a,b} = V_{b,a} \widetilde\eps_{a,b}$ for every $a,b$ in $\C$. By (\ref{eq:2-coboundaryexpansion}) this is equivalent to
$$\sum_{\substack{k\in\Delta, \\ e=1,\ldots,N^{k}_{i,j}}} X_c \, \widetilde\eps_{a,b} \circ t^e_{c,ab}\circ (t^e_{c,ab})^* \circ U_{a,b} = \sum_{\substack{l\in\Delta, \\ f=1,\ldots,N^{k}_{i,j}}} X_d\,  t^f_{d,ba}\circ (t^f_{d,ba})^* \circ U_{b,a} \circ \widetilde\eps_{a,b}$$
hence $(t^f_{d,ba})^* \circ \widetilde\eps_{a,b} \circ t^e_{c,ab} = (t^f_{d,ba})^* \circ \eps_{a,b} \circ t^e_{c,ab}$ and we have $\widetilde\eps_{a,b} = \eps_{a,b}$.
\end{proof}

On one hand, the results of this section say that two UMTCs $\C_1$ and $\C_2$ having the same modular data also have \lqq the same braiding", or better the same $R$-matrices in a suitable choice of orthonormal bases of isometries both in $\C_1$ and $\C_2$ (\Prop \ref{prop:Rcan}). On the other hand, if we fix the tensor structure and try to make the previous statement more functorial by constructing a unitary braided tensor equivalence (using the modular data via the trace formula of \Prop \ref{prop:bantaytrace}, with underlying functor the identity), it turns out that this is only possible when the two braidings actually coincide (\Prop \ref{prop:Ocneanurig}).

Roughly speaking, this pushes the classification problem of UMTCs by means of their modular data back to the question on how can the tensor structure itself be read off the modular matrices $S$, $T$.


\bigskip

{\bf Acknowledgements.}
Supported by the German Research Foundation (Deutsche Forschungsgemeinschaft (DFG)) through the Institutional Strategy of the University of G\"ottingen, and by the European Research Council (ERC) through the Advanced Grant QUEST \lq\lq Quantum Algebraic Structures and Models".
We are indebted to M. Bischoff for several discussions on this topic, and for his motivating interest, and to R. Longo and R. Conti for comments and questions about this work. L.G. thanks also P. Naaijkens for an invitation to Hannover, and for helpful conversations in that occasion together with his colleagues C. Brell and L. Fiedler.

\small


\begin{thebibliography}{BNRW15}

\bibitem[Ban97]{Ban97}
P.~Bantay.
\newblock The {F}robenius-{S}chur indicator in conformal field theory.
\newblock {\em Phys. Lett. B}, 394, 87--88, 1997.

\bibitem[BEK99]{BEK99}
J.~B{\"o}ckenhauer, D.~E. Evans, and Y.~Kawahigashi.
\newblock {On {$\alpha$}-induction, chiral generators and modular invariants
  for subfactors}.
\newblock {\em Comm. Math. Phys.}, 208, 429--487, 1999.

\bibitem[BK01]{BaKi00}
B.~Bakalov and A.~Kirillov, Jr.
\newblock {\em Lectures on tensor categories and modular functors}. {\em University Lecture Series} Vol.\ 21.
\newblock Amer.\ Math.\ Soc., Providence, RI, 2001.

\bibitem[BNRW15]{BNRW15}
P.~Bruillard, S.-H. Ng, E.~Rowell, and Z.~Wang.
\newblock Rank-finiteness for modular categories.
\newblock {\em Journal of the American Mathematical Society, online first},
  2015.

\bibitem[Bru13]{BruPhD}
P.~J. Bruillard.
\newblock {\em On the classification of low-rank braided fusion categories}.
\newblock PhD thesis, Texas A\&M University, 2013.

\bibitem[Dav97]{Dav97}
A.~Davydov.
\newblock Twisting of monoidal structures.
\newblock {\em Preprint arXiv:q-alg/9703001}, 1997.

\bibitem[DHW13]{DHW13}
O.~Davidovich, T.~Hagge, and Z.~Wang.
\newblock On arithmetic modular categories.
\newblock {\em Preprint arXiv:1305.2229}, 2013.

\bibitem[EGNO15]{EGNO15}
P.~Etingof, S.~Gelaki, D.~Nikshych, and V.~Ostrik.
\newblock {\em Tensor categories}. {\em Mathematical Surveys and
  Monographs} Vol.\ 205.
\newblock Amer.\ Math.\ Soc., Providence, RI, 2015.

\bibitem[ENO05]{ENO05}
P.~Etingof, D.~Nikshych, and V.~Ostrik.
\newblock On fusion categories.
\newblock {\em Ann. of Math. (2)}, 162, 581--642, 2005.

\bibitem[FRS92]{FRS92}
K.~Fredenhagen, K.-H. Rehren, and B.~Schroer.
\newblock Superselection sectors with braid group statistics and exchange
  algebras. {II}.\ {G}eometric aspects and conformal covariance.
\newblock {\em Rev. Math. Phys.}, SI1 (Special Issue), 113--157, 1992.

\bibitem[Gan05]{Gan05}
T.~Gannon.
\newblock Modular data: the algebraic combinatorics of conformal field theory.
\newblock {\em J. Algebraic Combin.}, 22, 211--250, 2005.

\bibitem[Gio16]{GioPhD}
L.~Giorgetti.
\newblock {\em {Braided actions of DHR categories and reconstruction of chiral
  conformal field theories}}.
\newblock PhD thesis, Georg-August-Universit{\"a}t G{\"o}ttingen, Institut
  f{\"u}r Theoretische Physik, 2016.

\bibitem[GR15]{GiRe15}
L.~Giorgetti and K.-H. Rehren.
\newblock Braided categories of endomorphisms as invariants for local quantum
  field theories.
\newblock {\em Preprint arXiv:1512.01995 (to appear in Comm.\ Math.\ Phys.\
  (2016))}, 2015.

\bibitem[Kit06]{Kit06}
A.~Kitaev.
\newblock Anyons in an exactly solved model and beyond.
\newblock {\em Ann. Physics}, 321, 2--111, 2006.

\bibitem[KLM01]{KLM01}
Y.~Kawahigashi, R.~Longo, and M.~M{\"u}ger.
\newblock {Multi-interval subfactors and modularity of representations in
  conformal field theory}.
\newblock {\em Comm. Math. Phys.}, 219, 631--669, 2001.

\bibitem[LR97]{LoRo97}
R.~Longo and J.~E. Roberts.
\newblock {A theory of dimension}.
\newblock {\em $K$-Theory}, 11, 103--159, 1997.

\bibitem[MS88]{MoSe88}
G.~Moore and N.~Seiberg.
\newblock Polynomial equations for rational conformal field theories.
\newblock {\em Phys. Lett. B}, 212, 451--460, 1988.

\bibitem[M{\"u}g00]{Mue00}
M.~M{\"u}ger.
\newblock Galois theory for braided tensor categories and the modular closure.
\newblock {\em Adv. Math.}, 150, 151--201, 2000.

\bibitem[M{\"u}g10]{Mue10tour}
M.~M{\"u}ger.
\newblock Tensor categories: a selective guided tour.
\newblock {\em Rev. Un. Mat. Argentina}, 51, 95--163, 2010.

\bibitem[Ng12]{Ng12}
S.-H. Ng.
\newblock A note on {F}robenius-{S}chur indicators.
\newblock In: {\em Proc. of the {I}nternational {C}onference on {A}lgebra
  2010}, 454--460. World Sci. Publ., Hackensack, NJ, 2012.

\bibitem[NS07]{NgSc07}
S.-H. Ng and P.~Schauenburg.
\newblock Frobenius-{S}chur indicators and exponents of spherical categories.
\newblock {\em Adv. Math.}, 211, 34--71, 2007.

\bibitem[Reh90]{Reh90}
K.-H. Rehren.
\newblock Braid group statistics and their superselection rules.
\newblock In: {\em The algebraic theory of superselection
  sectors}, ed.\ D.~Kastler, 333--355. World Sci. Publ., River Edge, NJ, 1990.

\bibitem[RSW09]{RSW09}
E.~Rowell, R.~Stong, and Z.~Wang.
\newblock On classification of modular tensor categories.
\newblock {\em Comm. Math. Phys.}, 292, 343--389, 2009.

\bibitem[Sch01]{Sch01}
P.~Schauenburg.
\newblock Turning monoidal categories into strict ones.
\newblock {\em New York J. Math.}, 7, 257--265 (electronic), 2001.

\bibitem[Ver88]{Ver88}
E.~Verlinde.
\newblock Fusion rules and modular transformations in {$2$}{D} conformal field
  theory.
\newblock {\em Nuclear Phys. B}, 300, 360--376, 1988.

\bibitem[Wan10]{Wan10lec}
Z.~Wang.
\newblock {\em Topological quantum computation}. {\em CBMS
  Regional Conference Series in Mathematics} Vol.\ 112.
\newblock Amer. Math. Soc., Providence, RI, 2010.

\end{thebibliography}

\def\cprime{$'$}

\end{document}